\documentclass[oneside,11pt]{article} 
\usepackage{makeidx}
\makeindex
\usepackage[utf8]{inputenc} 
\usepackage{amsthm}
\usepackage{amsmath}
\numberwithin{equation}{section}
\usepackage[colorlinks=true, linkcolor=blue, citecolor=blue, urlcolor=blue]{hyperref}
\setcounter{tocdepth}{3}
\usepackage[backend=biber, style=numeric, sorting=nyt, sortcites=true, maxbibnames=999, minbibnames=999]{biblatex}
\addbibresource{biblio.bib}  
\usepackage{url}
\usepackage{xcolor}
\usepackage{geometry} 
\geometry{a4paper} 
\usepackage{graphicx} 
\usepackage{booktabs} 
\usepackage{array} 
\usepackage{paralist} 
\usepackage{verbatim}
\usepackage{subfig} 
\usepackage{authblk}
\usepackage{amsmath}
\usepackage{amsfonts}
\usepackage{amssymb}
\usepackage{amsthm}
\usepackage{mathrsfs} 
\usepackage{fancyhdr} 
\pagestyle{fancy} 
 
\lhead{}\chead{}\rhead{}
\lfoot{}\cfoot{\thepage}\rfoot{}
\numberwithin{equation}{section} 
\usepackage{sectsty}
\allsectionsfont{\sffamily\mdseries\upshape} 
\usepackage[nottoc,notlof,notlot]{tocbibind} 
\usepackage[titles,subfigure]{tocloft}

\theoremstyle{remark}
\newtheorem{remark}{Remark}
\theoremstyle{theorem}
\newtheorem{theorem}{Theorem}
\theoremstyle{proposition}
\newtheorem{prop}{Proposition}
\theoremstyle{lemma}
\newtheorem{lemma}{Lemma}
\theoremstyle{defin}
\newtheorem{defin}{Definition}
\theoremstyle{hyp}

\theoremstyle{coro}

\theoremstyle{example}

\title{\bf A Caveat on Metrizing Convergence \\ in Distribution on Hilbert Spaces}

\newcommand{\R}{\mathbb{R}}

\usepackage{etoolbox}
\author[1]{Federico Bassetti}
\author[2]{Solesne Bourguin}
\author[3]{Simon Campese}
\author[4]{Giovanni Peccati}
\affil[1]{Department of Mathematics, Politecnico of Milan}
\affil[2]{Department of Mathematics and Statistics, Boston University}
\affil[3]{Institute of Mathematics, Hamburg University of Technology}
\affil[4]{Department of Mathematics, University of Luxembourg}
\date{}
\begin{document}
\maketitle
\begin{abstract}
  {We consider Sobolev-type distances on probability measures over separable Hilbert spaces involving the Schatten-$p$ norms, which include as special cases a distance first introduced by Bourguin and Campese (2020) when $p=2$, and a distance introduced by Gin\'e and Leon (1980) when $p=\infty$.
  Our analysis shows that, unless $p=\infty$, these distances fail to metrize convergence in distribution in infinite dimensions. This clarifies several inconsistencies and misconceptions in the recent literature that arose from confusion between different types of distances.
\\
\noindent{\bf AMS classification:} 54E70 ; 60B10 ;
60F17
\\
\noindent{\bf Keywords:} {Convergence in Distribution}; {Hilbert Spaces}; {Probabilistic Distances}}
\end{abstract}

\section{Introduction}\label{s:intro} 

In \cite{bourguincampese}, the second and third authors of the present paper introduced a probabilistic distance, noted $d_2(X,Y)$ and formally identified in Definition \ref{d:distances} and Remark \ref{r:clar} below, between the distributions of two random elements $X,Y$ taking values in a real separable Hilbert space $K$. As we will see, such a distance is obtained by considering the supremum of the quantity $|\mathbb{E}[f(X)] - \mathbb{E}[f(Y)]|$, where $f$ runs over a certain class of twice Fr\'echet differentiable test functions $f$, such that the second derivative $D^2 f$ induces a Hilbert-Schmidt operator on $K$ (see Definition \ref{d:HSD} for a precise formulation). The distance $d_2$ is claimed in \cite[Lemma 3.1]{bourguincampese} to metrize convergence in distribution on $K$.

\smallskip 

The use of the distance $d_2$ allowed the authors of \cite{bourguincampese} to develop an infinite-dimensional version of the so-called {\it Malliavin-Stein method} (see e.g. \cite{npbook}), including Fourth Moment and Breuer-Major type quantitative CLTs. Further uses of the distance $d_2$, and of the techniques initiated in \cite{bourguincampese}, have appeared in subsequent years (and are continuing to appear at the time of writing), spanning fields as diverse as stochastic analysis on configuration spaces, the geometry of random fields, and the large-scale study of randomly initialized neural networks --- see \cite{bourguincampesethanh, BourguinMarinucciDurastanti, cammarotaMarinucciSalviVigogna, caponera, düker2024breuermajortheoremshilbertspacevalued, FHMNP, vidotto2025functionalsecondordergaussianpoincare}\footnote{We stress that $d_2$ is only needed for \cite[formula~(3.27)]{FHMNP}; all other results in \cite{FHMNP} are independent of both $d_2$ and formula~(3.27) therein.
}.

\smallskip

The purpose of this paper is to provide a formal counterexample to \cite[Lemma 3.1]{bourguincampese}, by showing that if ${\rm dim}\, K = +\infty$, the distance $d_2$ {\it cannot be employed} to deduce convergence in distribution on $K$. \footnote{The proof of \cite[Lemma 3.1]{bourguincampese} is based on an erroneous interpretation of a result in \cite{CoutinDecreusefond}, where the considered distance is not the distance $d_2$, but rather the distance $\rho_\infty$ defined below.}

The distance $d_2$ is closely related to the one introduced in \cite{gineleon}, denoted $\rho_\infty$ in Definition~\ref{d:distances} below. Both may be viewed as variants of Zolotarev-type distances of order~$2$, see  \cite{Zolotarev76,drmota}. Conceptually, they share the same structure: in each case, the test functions $f$ are characterized by the fact that  $D^2 f$, regarded as a bounded bilinear operator on $K$, has norm at most~$1$. 
The essential distinction lies in the choice of the norm, a seemingly minor difference that has crucial consequences: $\rho_\infty$ metrizes weak convergence, whereas $d_2$ does not if ${\rm dim}\, K = +\infty$.

Our main result shows, in greater generality, that the integral probability distances $\rho_{p}$, defined analogously to $d_{2}$ with twice Fréchet differentiable test functions whose second derivative belongs to the Schatten $p$-class ($1 \leq p < \infty$), are not strong enough to metrize convergence in distribution. We state this result below and postpone both the precise definition of $\rho_p$ and its proof to the following sections.

\begin{theorem}\label{t:t} Suppose ${\rm dim}\, K = +\infty$ and let $p \in [1,\infty)$. Then, $\rho_p$ does not metrize convergence in distribution on $K$.
\end{theorem}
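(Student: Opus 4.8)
The plan is to exhibit an explicit sequence $(\mu_n)_{n\ge 1}$ of centered Gaussian probability measures on $K$, together with a limit $\mu$, for which $\rho_p(\mu_n,\mu)\to 0$ but $\mu_n$ does \emph{not} converge to $\mu$ in distribution; since a metric for convergence in distribution would in particular force $\rho_p(\mu_n,\mu)\to 0$ to imply $\mu_n\Rightarrow\mu$, this already proves the theorem. Fix an orthonormal basis $(e_k)_{k\ge1}$ of $K$ (available because $K$ is separable and infinite-dimensional), let $P_n$ be the orthogonal projection of $K$ onto $\mathrm{span}(e_1,\dots,e_n)$, and set $C_n:=\tfrac1n P_n$. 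Let $\mu_n$ be the centered Gaussian law on $K$ with covariance operator $C_n$, and let $\mu:=\delta_0$. All these measures are centered and have finite moments of every order, so the counterexample lives in the most regular class of laws imaginable.

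\emph{Step 1 (the main estimate).} Let $q\in(1,\infty]$ be the conjugate exponent of $p$, $\tfrac1p+\tfrac1q=1$ (so $q=\infty$, i.e.\ the operator norm, when $p=1$). I would first prove that, for any trace-class covariance operators $C,C'$ on $K$,
\begin{equation*}
\rho_p\bigl(N(0,C),N(0,C')\bigr)\ \le\ \tfrac12\,\|C-C'\|_{S_q},
\end{equation*}
where $\|\cdot\|_{S_q}$ is the Schatten-$q$ norm. To see this, take any admissible test function $f$ for $\rho_p$, so that in particular $\sup_{x\in K}\|D^2f(x)\|_{S_p}\le1$ (any further normalization built into the definition only shrinks the test class and hence can only decrease $\rho_p$), and interpolate along the Gaussian path $Z_t\sim N\bigl(0,(1-t)C+tC'\bigr)$, $t\in[0,1]$. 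A Gaussian integration by parts gives $\frac{d}{dt}\mathbb{E}[f(Z_t)]=\tfrac12\,\mathbb{E}\bigl[\mathrm{tr}\bigl(D^2f(Z_t)(C'-C)\bigr)\bigr]$, so that the Schatten--Hölder inequality $|\mathrm{tr}(AB)|\le\|A\|_{S_p}\|B\|_{S_q}$ yields
\begin{equation*}
\bigl|\mathbb{E}[f(Z_1)]-\mathbb{E}[f(Z_0)]\bigr|\ \le\ \tfrac12\,\|C'-C\|_{S_q}\sup_{t\in[0,1]}\mathbb{E}\,\|D^2f(Z_t)\|_{S_p}\ \le\ \tfrac12\,\|C'-C\|_{S_q},
\end{equation*}
and taking the supremum over $f$ proves the claim. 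Since $C_n$ has the single nonzero eigenvalue $1/n$ with multiplicity $n$, one has $\|C_n\|_{S_q}=n^{-1/p}$, and hence, taking $C=C_n$ and $C'=0$, $\rho_p(\mu_n,\delta_0)\le\tfrac12\,n^{-1/p}\to 0$ since $p<\infty$.

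\emph{Step 2 ($\mu_n$ does not converge weakly to $\mu$).} Representing $X\sim\mu_n$ as $X=n^{-1/2}\sum_{k=1}^n g_k e_k$ with $(g_k)$ i.i.d.\ standard Gaussian, one has $\|X\|^2=\tfrac1n\sum_{k=1}^n g_k^2$, hence $\mathbb{E}\|X\|^2=1$ and $\mathrm{Var}(\|X\|^2)=2/n\to 0$, so $\|X\|^2\to1$ in probability under $\mu_n$. Consequently, for the bounded continuous function $g(x):=\min(1,\|x\|)$ we get $\int g\,d\mu_n\to 1\neq0=g(0)=\int g\,d\mu$, which rules out $\mu_n\Rightarrow\mu$. (Equivalently, $(\mu_n)$ fails to be tight: the covariance tails $\sum_{k>N}\langle C_ne_k,e_k\rangle$ do not vanish uniformly in $n$.) Combining Steps 1 and 2 yields the theorem, and in fact shows that $\rho_p$-convergence fails to imply convergence in distribution already within the class of centered Gaussian measures.

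The only genuinely delicate point is the infinite-dimensional Gaussian integration by parts and the differentiation under the expectation along $(Z_t)$; this is routine, because $\sup_x\|D^2f(x)\|_{S_p}\le1$ (together with the fact that the linear part of $f$ is annihilated by centering) forces $f$ to have at most quadratic growth, so all relevant integrals against Gaussian measures are finite and the interpolation identity is legitimate. Conceptually, the heart of the matter --- the ``caveat'' of the title --- is that for $p<\infty$ the distance $\rho_p$ can only detect differences of covariance operators in the Schatten-$q$ norm with $q>1$ (merely in operator norm when $p=1$), whereas weak convergence of Gaussian measures on $K$ is governed by the strictly stronger trace-norm topology; the operators $C_n=\tfrac1n P_n$ are designed precisely to vanish in the former sense while keeping $\mu_n$ uniformly away from $\delta_0$ in the latter.
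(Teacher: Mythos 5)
Your proposal is correct and follows essentially the same route as the paper: the same counterexample $C_n=\tfrac1n P_n$ versus $\delta_0$, and the same key estimate $\rho_p(N(0,C),N(0,C'))\le\tfrac12\|C-C'\|_{S_q}$ obtained by Gaussian interpolation and Schatten--H\"older (the paper's Proposition~2). The only cosmetic difference is that you verify non-convergence directly via the concentration of $\|X_n\|^2$ around $1$, whereas the paper invokes a classical criterion for convergence in distribution of Gaussian elements in terms of covariance operators.
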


As a consequence of Theorem \ref{t:t} (note that $\rho_{2} = d_2$, see Remark \ref{r:clar}), the proofs of the functional CLTs deduced in the references \cite{bourguincampesethanh, BourguinMarinucciDurastanti, cammarotaMarinucciSalviVigogna, caponera, düker2024breuermajortheoremshilbertspacevalued, FHMNP} through the use of the distance $d_2$ are not valid. The proof of Theorem \ref{t:t} (as given in Section \ref{ss:proof} below) is by contradiction, and exploits the fact that the metrization property of $\rho_p$, for any $p\in [1, \infty)$, would negate some classical criteria for the convergence in distribution of Hilbert space-valued Gaussian random elements, see \cite[Chapter 3]{BogachevGaussian}.

\smallskip

Before giving a precise definition of the probabilistic metrics we work with, we recall some important facts on linear and bilinear forms and on derivatives of $K$-valued functions.
In particular, 
Lemma~\ref{l:test} below identifies the exact technical obstruction that prevents the classical metrization arguments used in \cite{gineleon, CoutinDecreusefond, drmota} from carrying over to the case of $\rho_p$ for $1\leq p <+\infty$.

\smallskip

\section{Preliminaries}\label{s:preliminaries}

\subsection{Linear mappings and bilinear forms}\label{ss:linear}

In this section, we recall some basic notions about linear mappings and bilinear forms on Banach and Hilbert spaces. Our main reference is \cite[Chapters IV and XIII]{lang}; see also \cite{dieudonne, doria, zeidler, Mishura}

\medskip

Given two real separable Banach spaces $E,F$, we write $L(E,F)$ to indicate the class of all linear continuous (or, equivalently, bounded) mappings from $E$ to $F$. As usual, we endow $L(E,F)$ with the operator norm 
\begin{equation}\label{e:opnorm}
\|T\|_{op} := \sup_{x\, : \, \|x\|_E = 1} \|Tx\|_F.
\end{equation}
It is a well-known fact that, since $F$ is Banach, then $(L(E,F), \|\cdot \|_{op} )$ is also a Banach space. Elements of $L(E,F)$ are also commonly called {\it bounded linear operators}, and we will use the two terms interchangeably.

\medskip 

 Given three real separable Banach spaces $E, F, G$, we write $L(E,F; G)$ to indicate the class of bilinear continuous (or, equivalently, bounded) mappings from the Cartesian product $E\times F$ (endowed with the product Banach topology) to $G$. We endow $L(E,F; G)$ with the norm
\begin{equation}\label{e:binorm}
\|\varphi\| := \sup_{x,y\, : \, \|x\|_E = \|y\|_F = 1} \|\varphi(x,y)\|_G, 
\end{equation}
so that $(L(E,F; G), \|\cdot\|)$ is also a Banach space (since $G$ is Banach). It is a classical result (see e.g. \cite[p. 67]{lang}) that $L(E,F; G)\simeq L(E, L(F, G))$ where `$\simeq$' indicates the existence of a Banach space isomorphism (that is, a norm-preserving, continuous linear mapping whose inverse is also continuous and linear). When $E=F$, we will say that $\varphi\in L(E,E; G)$ is {\it symmetric} if $\varphi(x,y) = \varphi(y,x)$, for all $x,y\in E$.

\medskip 

For the rest of the paper, we will denote by $K$ a real, separable Hilbert space, with inner product $\langle \cdot, \cdot\cdot \rangle_K$ and norm $\|\cdot \|_K := \langle \cdot, \cdot \rangle^{1/2}_K $. We recall that for $p \in [1,\infty)$ an operator $A\in L(K,K)$ is in the $p$-th \textit{Schatten class} $\mathcal{S}_p(K)$ if
\begin{equation}\label{e:pnorm}
   \left\lVert A \right\rVert_p = \operatorname{Tr}(\left| A \right|^p)^{1/p} = \left( \sum_{n=1}^{\infty} s_n(A)^p  \right)^{1/p} < \infty,
\end{equation}
where $\left| A \right| = \sqrt{A^{\ast}A}$ is the absolute value of $A$, $\operatorname{Tr}$ denotes the trace on $K$ and $(s_n(A))_{n \in \mathbb{N}}$ is the sequence of singular values of $A$, i.e. the eigenvalues of $\left| A \right|$ in decreasing order. It is well known that all Schatten class operators $A$ are compact. As usual, we define $\left\lVert A \right\rVert_{\infty} = \left\lVert A \right\rVert_{op}$ and $\mathcal{S}_{\infty}(K) = L(K,K)$. As $\left\lVert A \right\rVert_p \leq \left\lVert A \right\rVert_{p'}$ whenever $1 \leq p' \leq p \leq \infty$, the Schatten classes are increasing in $p$. Notable special cases include the trace class operators ($p=1$) and the {\it Hilbert-Schmidt operators} ($p=2$), where in the latter case the norm can also be written as $\left\lVert A \right\rVert_2 = \sum_{i=1}^\infty \|A e_i\|^2_K$ for some (and, hence, all) orthonormal basis $\{e_i\}$ of $K$. We observe that the class of all Hilbert-Schmidt operators on a real separable Hilbert space is itself a real separable Hilbert space, with inner product given by
\begin{equation}\label{e:innerHS}
\langle A,B\rangle_{HS} := \sum_{i=1}^\infty \langle Ae_i, Be_i\rangle_K,
\end{equation}
where $\{e_i\}$ is an arbitrary orthonormal basis of $K$ (the definition of $\langle \cdot, \cdot\cdot\rangle_{HS}$ is independent of the choice of the basis); we will henceforth use the symbol $\mathcal{HS}(K)$ to denote such a Hilbert space. We will also write $K\otimes K$ and $K\odot K$, respectively, to indicate the {\it tensor product} and the {\it symmetric tensor product} of $K$ with itself (see e.g. \cite[Appendix E]{Janson} for a concise and probability-oriented discussion of tensor products).

\medskip

The following elementary result (whose proof is left to the reader) plays an important role in the sequel. Its statement is provided in order to maintain the paper as self-contained as possible.

\begin{lemma}[Representation of Bilinear Forms]\label{l:bilinear} Let the above notation and assumptions prevail.
\begin{enumerate}
    \item[\rm (1)] For all $\varphi \in L(K,K; \R)$ (that is, for all real-valued bilinear forms $\varphi$ on the Hilbert space $K$), there exists a unique $A_\varphi \in L(K,K)$ such that 
    \begin{equation}\label{e:aphi}
    \varphi(x,y) = \langle A_\varphi x, y \rangle_K, \quad x,y\in K, 
    \end{equation}
    and one has moreover that $\| A_\varphi \|_{op} = \|\varphi\| $, where we have used the notation \eqref{e:opnorm} and \eqref{e:binorm}.
    \item[\rm (2)] Let $\varphi, A_{\varphi}$ be as at Point {\rm (1)}. Then, $\varphi$ is symmetric if and only if $A_\varphi$ is self-adjoint.
    \item[\rm (3)] Let Let $\varphi, A_{\varphi}$ be as at Point {\rm (1)}. Then, $A_\varphi$ is Hilbert-Schmidt if and only if there exists $ H_\varphi \in K\otimes K$ such that
    \begin{equation}\label{e:biliHS}
    \varphi(x,y) = \langle H_\varphi, x\otimes y\rangle_{K\otimes K},
    \end{equation}
    and, in this case, one has that $\varphi$ is symmetric if and only if $H_\varphi \in K\odot K$. 
\end{enumerate}
\end{lemma}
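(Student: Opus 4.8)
The plan is to reduce everything to two classical facts: the Riesz representation theorem, and the canonical isometric isomorphism between $\mathcal{HS}(K)$ and the tensor product $K\otimes K$.

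For Point~(1), I would fix $\varphi\in L(K,K;\R)$ and, for each $x\in K$, apply Riesz to the bounded linear functional $y\mapsto\varphi(x,y)$ (which has norm at most $\|\varphi\|\,\|x\|_K$) to obtain a unique vector $A_\varphi x\in K$ with $\varphi(x,y)=\langle A_\varphi x,y\rangle_K$ for all $y\in K$. Bilinearity of $\varphi$ together with uniqueness in the Riesz representation forces $x\mapsto A_\varphi x$ to be linear, and the identity $\|A_\varphi x\|_K=\sup_{\|y\|_K=1}|\varphi(x,y)|\le\|\varphi\|\,\|x\|_K$ gives $A_\varphi\in L(K,K)$ with $\|A_\varphi\|_{op}\le\|\varphi\|$; the reverse inequality, hence the equality $\|A_\varphi\|_{op}=\|\varphi\|$, follows by taking the supremum of $|\langle A_\varphi x,y\rangle_K|$ over the unit spheres in both arguments. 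Uniqueness of $A_\varphi$ is clear, since two operators inducing the same bilinear form must agree on every pair $(x,y)$. Point~(2) is then immediate: $\varphi(x,y)=\varphi(y,x)$ for all $x,y$ reads $\langle A_\varphi x,y\rangle_K=\langle A_\varphi y,x\rangle_K=\langle x,A_\varphi y\rangle_K$, which is precisely the statement that $A_\varphi$ is self-adjoint.

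For Point~(3), the key object is the map $\Psi\colon K\otimes K\to\mathcal{HS}(K)$ sending an elementary tensor $u\otimes v$ to the rank-one operator $x\mapsto\langle u,x\rangle_K\, v$. I would check that: (i) $\Psi$ is isometric on elementary tensors, since both $\|u\otimes v\|_{K\otimes K}$ and $\|\Psi(u\otimes v)\|_{HS}$ equal $\|u\|_K\,\|v\|_K$, and hence extends to an isometry by bilinearity and by density of finite sums of elementary tensors in $K\otimes K$; (ii) $\Psi$ is onto, because any $A\in\mathcal{HS}(K)$ equals $\Psi(H)$ with $H=\sum_{i,j}\langle Ae_i,e_j\rangle_K\, e_i\otimes e_j$ for a fixed orthonormal basis $\{e_i\}$ of $K$, the series converging in $K\otimes K$ exactly because $\sum_{i,j}|\langle Ae_i,e_j\rangle_K|^2=\|A\|_{HS}^2<\infty$; and (iii) by construction $\langle\Psi(H)x,y\rangle_K=\langle H,x\otimes y\rangle_{K\otimes K}$ for all $x,y\in K$. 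Combining (i)--(iii) with Point~(1), $A_\varphi$ is Hilbert--Schmidt if and only if $A_\varphi=\Psi(H_\varphi)$ for some $H_\varphi\in K\otimes K$, which is exactly the representation \eqref{e:biliHS}. For the symmetric refinement I would realize $K\odot K$ as the range of the orthogonal projection $\tfrac12(\mathrm{id}+\tau)$, where $\tau$ is the flip $u\otimes v\mapsto v\otimes u$, observe that $\Psi\circ\tau=(\,\cdot\,)^{\ast}\circ\Psi$ (the flip corresponds to taking adjoints), and conclude that $H_\varphi\in K\odot K$ iff $\tau H_\varphi=H_\varphi$ iff $A_\varphi=A_\varphi^{\ast}$ iff, by Point~(2), $\varphi$ is symmetric; concretely, all three conditions amount to the matrix $a_{ij}=\langle A_\varphi e_i,e_j\rangle_K$ being symmetric.

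The argument is essentially routine; the only step requiring genuine care is the construction of $\Psi$ in Point~(3) — specifically, verifying that the orthonormal-basis expansion of a Hilbert--Schmidt operator converges in the $K\otimes K$-norm (and not merely, say, in operator norm) and that $\Psi$ intertwines the flip on $K\otimes K$ with the adjoint operation on $\mathcal{HS}(K)$. Once $\Psi$ is available, Points~(1)--(3) follow by the bookkeeping indicated above.
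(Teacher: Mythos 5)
Your proposal is correct, and since the paper explicitly leaves the proof of this lemma to the reader, your argument (Riesz representation for Points~(1)--(2), plus the canonical isometry between $K\otimes K$ and $\mathcal{HS}(K)$ intertwining the flip with the adjoint for Point~(3)) is precisely the standard route the authors have in mind. The only step to tighten is that isometry of $\Psi$ on single elementary tensors does not by itself yield isometry on finite sums; you should instead verify $\langle\Psi(u\otimes v),\Psi(u'\otimes v')\rangle_{HS}=\langle u,u'\rangle_K\langle v,v'\rangle_K$, which is the same one-line computation and then gives the extension to all of $K\otimes K$ by density.
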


It is easily seen that, if it exists, the element $H_\varphi$ appearing in \eqref{e:biliHS} is unique; moreover, one has that (using the notation \eqref{e:opnorm}--\eqref{e:binorm}) 
\begin{equation}\label{e:chain}
\|\varphi\| \leq \|A_\varphi\|_{op}\leq \|A_\varphi\|_{HS} = \|H_\varphi \|_{K\otimes K},
\end{equation}
where $\|\cdot\|_{HS}$ the norm associated with the inner product defined in \eqref{e:innerHS}. In general, the inequality $\|\varphi\| \leq  \|H_\varphi \|_{K\otimes K}$ cannot be reversed.

\subsection{Differentiable mappings}\label{ss:diff}

Let $E$ be a real separable Banach space. A mapping $f : E\to \R$ is {\it differentiable} (in the Fr\'echet sense) at $x\in K$, if there exist $Df(x) \in L(E, \R) $ such that 
$$
\lim_{\|h\|_E\to 0} \frac{f(x+h) - f(x) - Df(x)(h)}{\|h\|_E} = 0.
$$
We write $f\in C^1(E, \R)$ is $f$ is differentiable at every point $x\in E$ and the mapping $E\to L(E, \R) : x\mapsto Df(x)$ is continuous. We observe that, if $E = K$ is a Hilbert space, then $Df(x)(h) = \langle H_x , h\rangle_K$ for some unique $H_x\in K$ (by the Riesz Representation Theorem), in such a way that $\|Df(x)\|_{op} = \|H_x\|_K$ (see \eqref{e:opnorm}). 

\medskip 

A mapping $f \in  C^1(E, \R)$ is {\it twice differentiable} at $x\in K$, if there exists a symmetric $\varphi_x \in L(E,E; \R)\simeq L(E; L(E, \R))$ such that
$$
\lim_{\|h\|_E\to 0} 
\sup_{\|y\|_E = 1} \left| \frac{Df(x+h)(y) - Df(x)(y) - \varphi_x(h,y)}{\|h\|_E} \right| = 0,
$$
and in this case we write $D^2f(x) := \varphi_x$ to indicate the second derivative of $f$ at $x$. Given $f: E\to \R$, we write $f\in C^2(E, \R)$ if $f\in C^1(E, \R)$, $f$ is twice differentiable at every $x\in E$, and the mapping $E\to L(E,E; \R) : x\mapsto D^2f(x)$ is continuous. 

\smallskip

\begin{defin}\label{d:HSD}{\rm Assume that $E = K$ is a Hilbert space. Then, we say that a mapping $f\in C^2(K, \R)$ {\it admits a $p$-th Schatten class second derivative} if, for all $x\in K$, one has that the operator $A_{\varphi_x}$ associated to the bilinear form $\varphi_x = D^2(f(x))$ via \eqref{e:aphi} is in $\mathcal{S}_p(K)$. In this case, we write (by a slight abuse of notation)}
\begin{equation}\label{e:abuse}
\|D^2f(x)\|_{p} := \| A_{\varphi_x}\|_{p}.
\end{equation}
\end{defin}

When ${\rm dim}\, K=+\infty$, an elementary example of a mapping $f\in C^2(K, \R)$ that {\it does not admit} a second derivative in a $p$-th Schatten class for some $p \in [1,\infty)$ is given by
$$
x\mapsto  \|x\|^2_K,\quad x\in K,
$$
for which one has that 
\begin{equation}\label{e:conti}
D\,\|x\|^2_K(h) = 2\langle x,h\rangle_K, \quad \mbox{and}\quad D^2\,\|x\|^2_K(h,k) = \varphi(h,k) = 2\langle h,k\rangle_K,
\end{equation}
in such a way that, in the notation of \eqref{e:aphi}, $A_\varphi = 2 I$, where $I$ is the identity operator on $K$ (note that, trivially, $I$ is compact if and only if $K$ is finite-dimensional). The following result is proved in Section \ref{s:proofs} by leveraging a variation of the second relation in \eqref{e:conti}.

\begin{lemma}[Smooth Radial Mappings]\label{l:test} Let $K$ be an infinite-dimensional real separable Hilbert space, and let $\psi : \R_+ \mapsto \R : r\mapsto \psi(r)$ be of class $C_b^\infty$ (that is, $\psi$ is bounded and infinitely differentiable, with bounded derivatives of every order). For a fixed $y\in K$, define the mapping
\begin{equation}\label{e:radialsmooth}
f : K\to \R : x\mapsto f(x) := \psi\left(\|x -y\|^2\right).
\end{equation}
Then, $f\in C^2(K,\R)$. Moreover, for each $x\in K$ the second Fr\'echet derivative
$D^2 f(x)$ (a bounded bilinear form on $K\times K$) can be identified via the
Riesz isomorphism with a bounded self-adjoint operator on $K$.
In this sense, $D^2 f(x)$ is a compact operator on $K$ for every $x\in K$
if and only if $\psi$ is constant.

\end{lemma}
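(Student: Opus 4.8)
The plan is to reduce to the case $y=0$ by translation invariance, compute the first two Fréchet derivatives of $f$ through the chain rule, and then read off the compactness of the associated operator from an explicit decomposition of $A_{\varphi_x}$ as a finite-rank perturbation of a multiple of the identity.

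First I would write $f=\psi\circ g$, where $g:K\to\R_+$ is the second-degree polynomial $g(x):=\|x-y\|^2_K$. Elementary computations give $Dg(x)(h)=2\langle x-y,h\rangle_K$, $D^2g(x)(h,k)=2\langle h,k\rangle_K$, and $D^3g\equiv 0$. Since $\psi\in C_b^\infty$ and $g$ takes values in $\R_+$, the chain rule then yields
\[
Df(x)(h)=2\,\psi'\big(\|x-y\|^2_K\big)\,\langle x-y,h\rangle_K
\]
and
\[
D^2f(x)(h,k)=4\,\psi''\big(\|x-y\|^2_K\big)\,\langle x-y,h\rangle_K\langle x-y,k\rangle_K+2\,\psi'\big(\|x-y\|^2_K\big)\,\langle h,k\rangle_K .
\]
To avoid invoking a black-box chain rule, one can verify the limits defining $Df$ and $D^2f$ in Section~\ref{ss:diff} directly, expanding $\psi$ to second order with Lagrange remainder; the boundedness of $\psi'$, $\psi''$ and $\psi'''$ controls the remainder terms, shows that each $D^2f(x)$ is a bounded symmetric bilinear form, and gives the continuity of $x\mapsto Df(x)$ and $x\mapsto D^2f(x)$, so that $f\in C^2(K,\R)$. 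This is the step carrying the most bookkeeping, but it is routine and parallels the computation recorded in \eqref{e:conti}.

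Next, in the notation of Lemma~\ref{l:bilinear}, the operator $A_{\varphi_x}$ associated with the bilinear form $\varphi_x=D^2f(x)$ is
\[
A_{\varphi_x}=4\,\psi''\big(\|x-y\|^2_K\big)\,Q_x+2\,\psi'\big(\|x-y\|^2_K\big)\,I ,
\]
where $Q_x\in L(K,K)$ is the rank-one operator $h\mapsto\langle x-y,h\rangle_K\,(x-y)$ and $I$ is the identity on $K$. Since $Q_x$ has finite rank it is compact, so $A_{\varphi_x}$ is compact if and only if the scalar operator $2\psi'(\|x-y\|^2_K)\,I$ is; and in the infinite-dimensional Hilbert space $K$ a multiple $cI$ of the identity is compact only when $c=0$ (exactly as recalled before the statement of the lemma). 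Hence $A_{\varphi_x}$ is compact if and only if $\psi'(\|x-y\|^2_K)=0$.

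Finally, the second derivative of $f$ is compact, in the sense that $A_{\varphi_x}$ is compact for every $x\in K$, if and only if $\psi'(r)=0$ for all $r\in\R_+$, because $\|x-y\|^2_K$ ranges over all of $\R_+$ as $x$ ranges over $K$. Since $\psi$ is of class $C^1$ on $\R_+$, this is equivalent to $\psi$ being constant, which proves the lemma. Note that the only genuinely infinite-dimensional ingredient is the non-compactness of $I$, in accordance with the fact that the statement is false when $\dim K<\infty$.
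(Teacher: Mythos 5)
Your proposal is correct and follows essentially the same route as the paper: compute $Df$ and $D^2f$ via the chain and product rules, identify $A_{\varphi_x}$ as $2\psi'(\|x-y\|_K^2)\,I$ plus a rank-one operator, and conclude from the non-compactness of the identity in infinite dimensions (your explicit remark that $\|x-y\|_K^2$ sweeps out all of $\R_+$ is a welcome clarification of the final step). No substantive differences to report.
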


\section{Proof of the main result}\label{s:main}
From now on, every random element is assumed to be defined on a common probability space $(\Omega, \mathcal{F}, \mathbb{P}) $, with $\mathbb{E}$ denoting the expectation with respect to $\mathbb{P}$.

\subsection{Probabilistic distances on Hilbert spaces}\label{ss:proba}

Let $K$ be a real separable Hilbert space, that we endow with the Borel $\sigma$-field associated with the norm $\|\cdot \|_K$. As usual, given a collection $\{X_n,X : n\geq 1\}$ of $K$-valued random elements, we say that $X_n$ {\it converges in distribution} to $X$ if $\mathbb{E}[g(X_n)] \to \mathbb{E}[g(X)] $, for all $g : K\to \R$ continuous and bounded. We write $\mathcal{P}(K)$ to indicate the class of all probability measures on $K$. Given a distance $d : \mathcal{P}(K)\times \mathcal{P}(K)\to \R_+: (P,Q)\mapsto d(P,Q)$ on $\mathcal{P}(K)$, we say that $d$ {\it metrizes convergence in distribution} if, for every collection $\{X_n,X : n\geq 1\}$ of $K$-valued random elements, one has that $X_n$ converges in distribution to $X$ if and only if $d(X_n,X)\to 0$, where we have used the convention
\begin{equation}\label{e:convention}
d(X,Y) :=  d\left({\rm Law}(X), {\rm Law}(Y)\right).
\end{equation}

\smallskip

We will now define the announced distances $\rho_{p}$ on $\mathcal{P}(K)$.

\begin{defin}\label{d:distances}{\rm Let the above notation and assumptions prevail. For $p  \in [1,\infty]$, the distance $\rho_{p} : \mathcal{P}(K)\times \mathcal{P}(K)\to \R_+$ is defined as follows: given $P,Q\in \mathcal{P}(K)$ and $X,Y$ such that ${\rm Law}(X) = P$ and ${\rm Law}(Y) = Q$,
$$
\rho_{p}(P,Q) = \rho_{p}(X,Y) = \sup_{f\in \mathcal{F}_{p} } \left| \mathbb{E}[f(X)] - \mathbb{E}[f(Y)] \right|,
$$
where $\mathcal{F}_{p}$ denotes the class of all $f\in C^2(K, \R)$ that admit a second derivative in the $p$-th Schatten class (see Definition \ref{d:HSD}) and such that
$$
\sup_{x\in K} \left(  \|Df(x)\|_{op} +  \|D^2f(x)\|_{p}  \right) \leq 1,
$$
where we have adopted the notation \eqref{e:abuse}.
}
\end{defin}

\begin{remark}\label{r:clar} The symbol $d_2$ in \cite{bourguincampese} indicates the distance denoted by $\rho_2$ in Definition \ref{d:distances}.
\end{remark}

Note that, as the Schatten classes are increasing in $p$, one has that $\rho_{p} \leq \rho_{p'}$ whenever $p \leq p'$.

The distance $\rho_{\infty}$ is an element (up to some minor variations) of the class of discrepancies considered in \cite{CoutinDecreusefond, gineleon} (see also \cite{drmota}). It is a well-known fact (see e.g. \cite[Theorem 2.4]{gineleon} or \cite[Theorem 7]{CoutinDecreusefond}) that $\rho_{\infty}$ metrizes convergence in distribution on every separable real Hilbert space $K$. As previously discussed, Theorem~\ref{t:t} --- whose proof is the subject of the next section --- establishes that for $p \in [1,\infty)$, the distance $\rho_{p}$ metrizes convergence in distribution {only} in the trivial finite-dimensional case, that is, when $K$ has finite dimension and the choice of norm is thus irrelevant.

\begin{remark}{\rm A crucial step in the proof of the metrization property of $\rho_{\infty}$ in
\cite{gineleon} (see, specifically, Theorem~2.4 therein; the symbol $d_{2}$ there corresponds to our notation $\rho_{\infty}$) is the uniform approximation
of indicators of finite intersections of open balls in $K$ by finite products of smooth radial test functions of the form~\eqref{e:radialsmooth}, belonging to the class $\mathcal{F}_{\infty}$. Lemma~\ref{l:test} pinpoints the precise reason why
this approximation strategy {breaks down} when one works with the distances $\rho_{p}$ with $p \in [1,\infty)$ instead of $\rho_{\infty}$. Heuristically, this is due to the fact that the Schatten classes are dense in the space of compact operators, which in turn form a closed and hence non-dense subspace of the bounded operators. Consequently, test functions in $\mathcal{F}_{p}$ for $p \in [1,\infty)$ cannot approximate arbitrary test functions from the metrizing class $\mathcal{F}_{\infty}$ in the corresponding Sobolev-type distance $\left\lVert f \right\rVert = \sup_{x \in K} \left(  \left\lVert Df(x) \right\rVert_{\operatorname{op}} + \left\lVert D^{2}f(x) \right\rVert_{\infty} \right)$. }
\end{remark}

\subsection{Proof of Theorem \ref{t:t}}\label{ss:proof}

We recall that the distribution of a centered Gaussian random element $X$, with values in the Hilbert space $K$, is completely determined by its {\it covariance operator} $S=S_X$, which is defined as the unique symmetric, positive and trace-class (and therefore Hilbert-Schmidt) linear operator $S : K\to K$ such that, for all $g\in K$, $\langle X, g\rangle_K$ is a centered Gaussian random variable with variance $\langle S g, g\rangle_K\geq 0$ (see e.g. \cite[Chapter 1]{daprato}). To show Theorem \ref{t:t}, we will need the following classical characterization of convergence in distribution for centered Gaussian elements (see \cite[Example 3.8.13 and Example 3.8.15]{BogachevGaussian}, as well as \cite[Lemma 4.4]{DNPR}). The statement makes use of the notation introduced in \eqref{e:opnorm} and \eqref{e:innerHS}.

\begin{prop}\label{p:boga} Let $\{X,X_n : n\geq 1\}$ be a sequence of centered Gaussian elements with values in $K$ and let $\{S, S_n : n\geq 1\}$ be the collection of their covariance operators. Then, the following three properties are equivalent, as $n\to \infty$:
\begin{itemize}
\item[\rm (i)] $X_n$ converges in distribution to $X$;

\item[\rm (ii)] $\| \sqrt{S_n} - \sqrt{S}\|_{HS}\to 0$;

\item[\rm (iii)] $\| {S_n} - {S}\|_{op}\to 0$
and $\mathbb{E}[\|X_n\|^2_{K}]\to \mathbb{E}[\|X\|^2_{K}].$
\end{itemize}
\end{prop}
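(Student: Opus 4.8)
The plan is to establish the three implications $(ii)\Rightarrow(i)$, $(iii)\Rightarrow(ii)$ and $(i)\Rightarrow(iii)$, which together close the equivalence. I would rely throughout on three elementary facts about a centered Gaussian element $Z$ in $K$ with covariance operator $T$: its characteristic functional is $h\mapsto\exp(-\tfrac12\langle Th,h\rangle_K)$; one has $\operatorname{Tr}(T)=\|\sqrt T\|_{HS}^2=\mathbb{E}[\|Z\|_K^2]$ while $\mathbb{E}[\|Z\|_K^4]=(\operatorname{Tr} T)^2+2\|T\|_{HS}^2\leq 3(\operatorname{Tr} T)^2$; and, having fixed an orthonormal basis $\{e_k\}$ of $K$ and i.i.d.\ standard Gaussians $\{\xi_k\}$, the series $\sum_k\xi_k\sqrt{T}e_k$ converges in $L^2(\Omega;K)$ and almost surely (its summands being independent, centered, with squared $K$-norms summing to $\|\sqrt T\|_{HS}^2=\operatorname{Tr}(T)<\infty$) to a centered Gaussian element with covariance $T$.

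The implication $(ii)\Rightarrow(i)$ is then immediate: realize $X$ and the $X_n$ on a common space using the \emph{same} sequence $\{\xi_k\}$, as $\sum_k\xi_k\sqrt{S}e_k$ and $\sum_k\xi_k\sqrt{S_n}e_k$; since $\mathbb{E}[\|X_n-X\|_K^2]=\sum_k\|(\sqrt{S_n}-\sqrt S)e_k\|_K^2=\|\sqrt{S_n}-\sqrt S\|_{HS}^2\to 0$, this gives $L^2$-convergence, hence convergence in distribution. For $(iii)\Rightarrow(ii)$, I would first invoke the operator-norm continuity of the square root on nonnegative self-adjoint operators to get $\|\sqrt{S_n}-\sqrt S\|_{op}\to 0$ (in particular $\sup_n\|\sqrt{S_n}\|_{op}<\infty$), and then expand $\|\sqrt{S_n}-\sqrt S\|_{HS}^2=\|\sqrt{S_n}\|_{HS}^2+\|\sqrt S\|_{HS}^2-2\langle\sqrt{S_n},\sqrt S\rangle_{HS}$: the first two terms tend to $\|\sqrt S\|_{HS}^2$ because $\|\sqrt{S_n}\|_{HS}^2=\operatorname{Tr}(S_n)=\mathbb{E}[\|X_n\|_K^2]\to\mathbb{E}[\|X\|_K^2]$, while writing $\langle\sqrt{S_n},\sqrt S\rangle_{HS}=\sum_k\mu_k\langle\sqrt{S_n}f_k,f_k\rangle_K$ in the eigenbasis $\{f_k\}$ of $\sqrt S$ (eigenvalues $\mu_k$, $\sum_k\mu_k^2<\infty$) and combining termwise convergence to $\mu_k^2$ with the Cauchy--Schwarz tail bound $\sum_{k>N}\mu_k\langle\sqrt{S_n}f_k,f_k\rangle_K\leq(\sum_{k>N}\mu_k^2)^{1/2}\|\sqrt{S_n}\|_{HS}$ (uniform in $n$) yields $\langle\sqrt{S_n},\sqrt S\rangle_{HS}\to\|\sqrt S\|_{HS}^2$.

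The substantial part is $(i)\Rightarrow(iii)$. Convergence in distribution makes the characteristic functionals converge pointwise, so $\langle S_nh,h\rangle_K\to\langle Sh,h\rangle_K$ for every $h$ and, by polarization, $S_n\to S$ in the weak operator topology. To get the second assertion of $(iii)$, note that $\{\mathrm{Law}(X_n)\}$ is uniformly tight (a weakly convergent sequence of Borel probability measures on the Polish space $K$ is tight), so there is $R<\infty$ with $\sup_n\mathbb{P}(\|X_n\|_K>R)<\tfrac1{12}$; the Paley--Zygmund inequality applied to $\|X_n\|_K^2$, together with $\mathbb{E}[\|X_n\|_K^4]\leq 3(\operatorname{Tr} S_n)^2$, gives $\mathbb{P}(\|X_n\|_K^2>\tfrac12\operatorname{Tr} S_n)\geq\tfrac1{12}$, and these two facts force $\sup_n\operatorname{Tr}(S_n)<\infty$; hence $\{\|X_n\|_K^2\}$ is bounded in $L^2$, therefore uniformly integrable, and since $\|X_n\|_K^2\to\|X\|_K^2$ in distribution we conclude $\operatorname{Tr}(S_n)=\mathbb{E}[\|X_n\|_K^2]\to\mathbb{E}[\|X\|_K^2]=\operatorname{Tr}(S)$. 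To upgrade weak operator convergence to operator-norm convergence I would then use that the diagonal entries $\langle S_ne_k,e_k\rangle_K\to\langle Se_k,e_k\rangle_K$ and, by the trace convergence just obtained and Scheffé's lemma, $\sum_k\bigl|\langle S_ne_k,e_k\rangle_K-\langle Se_k,e_k\rangle_K\bigr|\to 0$, so that the trace-tails $\sum_{k>N}\langle S_ne_k,e_k\rangle_K$ are small uniformly in $n$; combining the resulting estimates $\|(I-P_N)S_n\|_{op}\leq(\sum_{k>N}\langle S_ne_k,e_k\rangle_K)^{1/2}(\operatorname{Tr} S_n)^{1/2}$ (with $P_N$ the orthogonal projection onto $\mathrm{span}(e_1,\dots,e_N)$, and likewise for $S$) with the finite-dimensional convergence $\|P_N(S_n-S)P_N\|_{op}\to 0$ for each fixed $N$ yields $\|S_n-S\|_{op}\to 0$.

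I expect the main obstacle to be exactly this direction $(i)\Rightarrow(iii)$: first, extracting the \emph{quantitative} bound $\sup_n\operatorname{Tr}(S_n)=\sup_n\mathbb{E}[\|X_n\|_K^2]<\infty$ out of the merely qualitative tightness of $\{\mathrm{Law}(X_n)\}$ --- this is where the Gaussian-specific inputs (anti-concentration via Paley--Zygmund, then the hypercontractive fourth-moment bound and uniform integrability) are indispensable --- and second, upgrading weak operator convergence, which in infinite dimensions is strictly weaker than operator-norm convergence, to the latter by means of the Scheffé-type control of the trace tails. The remaining two implications are comparatively routine once the series representation and the operator-norm continuity of the square root are available.
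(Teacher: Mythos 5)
Your proof is correct. Note, however, that the paper does not prove Proposition~\ref{p:boga} at all: it is invoked as a classical fact with a pointer to Bogachev's book (Examples 3.8.13 and 3.8.15) and to \cite[Lemma 4.4]{DNPR}, so there is no in-paper argument to compare yours against. What you supply is a self-contained derivation, and the logical cycle $(i)\Rightarrow(iii)\Rightarrow(ii)\Rightarrow(i)$ closes the equivalence. The individual steps all hold up: the coupling $\sum_k \xi_k \sqrt{S_n}e_k$ versus $\sum_k \xi_k\sqrt{S}e_k$ gives $(ii)\Rightarrow(i)$ via $L^2$-convergence; the operator inequality $\|\sqrt{A}-\sqrt{B}\|_{op}\le\|A-B\|_{op}^{1/2}$ for positive operators plus the trace convergence and the Cauchy--Schwarz tail control legitimately upgrade to Hilbert--Schmidt convergence of the square roots; and in $(i)\Rightarrow(iii)$ the Gaussian-specific inputs (the fourth-moment identity $\mathbb{E}\|Z\|_K^4=(\operatorname{Tr}T)^2+2\|T\|_{HS}^2$, Paley--Zygmund against tightness to force $\sup_n\operatorname{Tr}S_n<\infty$, then uniform integrability) are exactly what is needed to pass from qualitative weak convergence to convergence of second moments, after which the Scheff\'e-type control of the diagonal tails combined with the bound $\|(I-P_N)S_n\|_{op}^2\le(\operatorname{Tr}S_n)\sum_{k>N}\langle S_ne_k,e_k\rangle_K$ and the finite-dimensional block convergence yields operator-norm convergence. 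The one place worth a word more in a written version is the passage from $\|P_N(S_n-S)P_N\|_{op}\to0$ and the two tail estimates to $\|S_n-S\|_{op}\to0$: one should spell out the decomposition $S_n-S=(I-P_N)(S_n-S)+P_N(S_n-S)(I-P_N)+P_N(S_n-S)P_N$ and use self-adjointness to control the middle term by the tail estimates; this is routine but currently implicit.
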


We will also need the following estimate, which is a generalization of \cite[Proposition 5.13]{FHMNP}.

\begin{prop}\label{p:ibp} Let $X_1, X_2$ be two centered Gaussian elements with values in $K$, and let $S_1,S_2$ be their covariance kernels. Then, for any $p \in [1,\infty]$, one has
$$
{\rho}_p(X_1,X_2) \leq \frac{1}{2} \| S_1 - S_2\|_{q},
$$
where $1/p + 1/q = 1$.
\end{prop}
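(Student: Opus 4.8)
The plan is to run a Gaussian interpolation (``smart path'') argument, generalizing the one behind \cite[Proposition~5.12]{FHMNP}. Since $\rho_p$ depends only on the laws, I would first realize $X_1$ and $X_2$ as \emph{independent} centred Gaussian elements of $K$ on a common probability space, with covariance operators $S_1,S_2$, and set, for $t\in[0,1]$,
\[
W_t:=\sqrt{t}\,X_1+\sqrt{1-t}\,X_2,
\]
a centred Gaussian element with covariance operator $tS_1+(1-t)S_2$, so that $W_1=X_1$ and $W_0\overset{d}{=}X_2$. Fix $f\in\mathcal{F}_p$ and put $\Phi(t):=\mathbb{E}[f(W_t)]$. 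The constraint defining $\mathcal{F}_p$ forces $\|Df(x)\|_{op}\le 1$ and, since $\|\cdot\|_{op}\le\|\cdot\|_p$, also $\|A_{D^2 f(x)}\|_{op}\le\|D^2 f(x)\|_p\le 1$ for every $x$; in particular $f$ has at most linear growth, all the expectations below are finite, and $t\mapsto\Phi(t)$ is differentiable on $(0,1)$ with $\Phi'(t)=\mathbb{E}[Df(W_t)(\dot W_t)]$, where $\dot W_t=\tfrac{1}{2\sqrt t}X_1-\tfrac{1}{2\sqrt{1-t}}X_2$. The differentiation under the expectation is justified by dominated convergence, using $|f(W_{t+h})-f(W_t)|\le\|W_{t+h}-W_t\|_K$ together with the integrability of $\sup_s\|\dot W_s\|_K$ on compact subsets of $(0,1)$.

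The heart of the matter is a Gaussian integration by parts, carried out coordinate-wise in an orthonormal basis $\{e_i\}$ of $K$. Writing $Df(W_t)(X_1)=\sum_i\langle X_1,e_i\rangle_K\,\partial_i f(W_t)$ with $\partial_i f:=Df(\cdot)(e_i)$, applying the Gaussian integration by parts formula to the jointly Gaussian pair $(X_1,W_t)$, whose cross-covariance operator is $\sqrt t\,S_1$, and summing over $i$, one gets
\[
\mathbb{E}\big[Df(W_t)(X_1)\big]=\sqrt{t}\;\mathbb{E}\big[\operatorname{Tr}\big(A_{D^2 f(W_t)}\,S_1\big)\big],
\]
where $A_{D^2 f(W_t)}\in\mathcal{S}_p(K)$ is the self-adjoint operator associated with the symmetric bilinear form $D^2 f(W_t)$ via Lemma~\ref{l:bilinear}, and one uses $\sum_i\langle A S_1 e_i,e_i\rangle_K=\operatorname{Tr}(AS_1)$. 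The interchange of $\sum_i$ with $\mathbb{E}$ and the absolute convergence of the coordinate sum defining the trace are legitimate because $A_{D^2 f(W_t)}S_j$ is trace class with $\|A_{D^2 f(W_t)}S_j\|_1\le\|A_{D^2 f(W_t)}\|_{op}\|S_j\|_1\le\operatorname{Tr}(S_j)<\infty$, uniformly in $\omega$. The analogous identity holds for $X_2$ with $\sqrt{1-t}\,S_2$ in place of $\sqrt t\,S_1$, and substituting into the expression for $\Phi'$ the two prefactors cancel, leaving
\[
\Phi'(t)=\tfrac12\,\mathbb{E}\big[\operatorname{Tr}\big(A_{D^2 f(W_t)}\,(S_1-S_2)\big)\big],\qquad t\in(0,1).
\]

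To conclude, I would apply Hölder's inequality for Schatten norms, $|\operatorname{Tr}(AB)|\le\|A\|_p\|B\|_q$ with $1/p+1/q=1$ (interpreting $\|\cdot\|_\infty=\|\cdot\|_{op}$), noting that $S_1-S_2$ is trace class and hence lies in $\mathcal{S}_q(K)$ for every $q$, and that $\|A_{D^2 f(W_t)}\|_p=\|D^2 f(W_t)\|_p\le 1$. This gives $|\Phi'(t)|\le\tfrac12\|S_1-S_2\|_q$ uniformly on $(0,1)$, so $\Phi$ is Lipschitz on $(0,1)$, extends continuously to $[0,1]$ with $\Phi(1)=\mathbb{E}[f(X_1)]$ and $\Phi(0)=\mathbb{E}[f(X_2)]$ (by dominated convergence and the linear growth of $f$), and the fundamental theorem of calculus yields
\[
\big|\mathbb{E}[f(X_1)]-\mathbb{E}[f(X_2)]\big|=\Big|\int_0^1\Phi'(t)\,dt\Big|\le\tfrac12\,\|S_1-S_2\|_q.
\]
Taking the supremum over $f\in\mathcal{F}_p$ gives the claim; the same computation covers $p=\infty$ (with $q=1$). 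The one step requiring genuine care is the passage from the finite-dimensional Gaussian integration by parts to its infinite-dimensional form and the attendant interchange of the coordinate sum (the trace) with the expectation — which is exactly where the uniform bound $\|A_{D^2 f}\|_{op}\le 1$ and the trace-class property of $S_1,S_2$ enter. The singularity of $\dot W_t$ at $t\in\{0,1\}$ is harmless, since $t^{-1/2}$ and $(1-t)^{-1/2}$ are integrable on $[0,1]$.
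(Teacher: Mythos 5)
Your proof is correct and follows essentially the same route as the paper's: the Gaussian interpolation $U_t=\sqrt{t}\,X_1+\sqrt{1-t}\,X_2$ leading to the identity $\mathbb{E}[f(X_1)]-\mathbb{E}[f(X_2)]=\tfrac12\int_0^1\mathbb{E}\bigl[\operatorname{Tr}\bigl(A_{D^2 f(U_t)}(S_1-S_2)\bigr)\bigr]\,dt$, followed by H\"older's inequality for Schatten norms and the bound $\sup_x\|D^2f(x)\|_p\le 1$. The only difference is that you carry out in full the Gaussian integration by parts and the attendant interchanges of sum and expectation, which the paper delegates to the proof of Proposition 5.12 in \cite{FHMNP}.
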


\begin{proof}
  Let $h \in \mathcal{F}_p$ and define $U_t=\sqrt{t}X_1 + \sqrt{1-t} X_2$ for $t \in [0,1]$. Proceeding as in the proof of \cite[Proposition 5.13]{FHMNP}, we obtain
  \begin{equation*}
    \mathbb{E} \left[ h\left(X_1\right) \right] - \mathbb{E} \left[ h\left(X_2\right) \right]
    =
    \frac{1}{2}
    \int_0^1 \mathbb{E} \left[ \left\langle D^2 h(U_t),S_{1} - S_2 \right\rangle_{HS} \right] dt
\end{equation*}
so that, applying Hölder's inequality for the Schatten norms,
\begin{align*}
  \left|     \mathbb{E} \left[ h\left(X_1\right) \right] - \mathbb{E} \left[ h\left(X_2\right) \right] \right|
  &\leq
  \frac{1}{2} \int_0^1 \mathbb{E} \left[ \operatorname{Tr} \left| \left( D^2 h(U_t)  \right)^{\ast} \left( S_1-S_2 \right)\right|  \right] dt
  \\ &=
  \frac{1}{2} \int_0^1 \mathbb{E} \left[ \left\lVert \left( D^2 h(U_t)  \right)^{\ast} \left( S_1-S_2 \right)  \right\rVert_1  \right]  dt
  \\ &\leq
  \frac{1}{2} \int_0^1 \mathbb{E} \left[ \left\lVert D^2 h(U_t)  \right\rVert_p \left\lVert S_1-S_2 \right\rVert_q \right]  dt
  \\ &\leq
  \frac{1}{2} \left( \sup_{x \in K} \left\lVert D^2 h(x) \right\rVert_p \right) \left\lVert S_1-S_2 \right\rVert_q
  \\ &\leq
  \frac{1}{2} \left\lVert S_1-S_2 \right\rVert_q.
\end{align*}
It remains to take the supremum over all $h \in \mathcal{F}_p$.
\end{proof}

\begin{remark}{\rm For $p=\infty$, Proposition \ref{p:ibp} yields the estimate
\begin{equation}\label{e:glgl}
    \rho_\infty(X_1,X_2) \leq \frac12 \|S_1-S_2\|_1.
\end{equation}
The bound \eqref{e:glgl} is consistent with the fact that $\rho_\infty$ metrizes convergence in distribution on $K$, because of Proposition \ref{p:boga} and the classical inequality
$$
\|\sqrt{S_1} - \sqrt{S_2}\|_{HS}\leq \|S_1-S_2\|_1,
$$
see e.g. \cite[Lemma 4.1]{PS}.}
\end{remark}

\medskip

\noindent{\it End of the proof of Theorem \ref{t:t}}. We reason by contradiction, fix $p \in [1,\infty)$ and assume that $\rho_p$ metrizes indeed convergence in distribution. Let $\{e_i : i\geq 1\}$ be an orthonormal basis of $K$, consider a sequence $\{g_i : i\geq 1\}$ of i.i.d. standard Gaussian random variables and define
$$
X_n := \sum_{i = 1}^{n} \frac{g_i}{n^{1/2}} e_i.
$$
Then, $\{X_n\}$ is a sequence of $K$-valued centered Gaussian elements with covariance kernels $\{S_n\}$ given by $S_n = \sum_{i=1}^{n} \frac{1}{n} \left\langle e_i, \cdot \right\rangle_K e_i$. Therefore, denoting $q= \frac{p}{p-1} > 1$ (with the convention $q=\infty$ when $p=1$), one has

\begin{eqnarray*}
 &&\| S_n\|^q_{q} = \sum_{i=1}^n 1/n^{q} = n^{1-q} \to 0 \quad \mbox{(case $p>1$),}\\
 && \| S_n\|_\infty = \frac1n \to 0 \quad\mbox{(case $p=1$),}
\end{eqnarray*}

and also
\begin{equation*}
  \left\lVert \sqrt{S_n} \right\rVert_{HS}^{2} = \sum_{i=1}^n \frac{1}{n} = 1.
\end{equation*}
By virtue of Proposition \ref{p:boga}, it follows that $X_n$ does not converge in distribution to $Z$, although Proposition \ref{p:ibp} implies that ${\rho}_p(X_n, Z) \to 0$. The conclusion follows. \qed

\smallskip

\begin{remark}{\rm  Note that the argument rehearsed in the above proof does not work for the distance $\rho_{\infty}$, i.e. when $p=\infty$, as in this case $q=1$, so that $\left\lVert S_n \right\rVert_1 = 1 \nrightarrow 0$.}
    \end{remark}

\section{Proof of Lemma \ref{l:test}}\label{s:proofs}

Using the standard chain rule (as stated e.g. in \cite[p. 337]{lang}), one has that
$$
Df(x)(h) = 2 \psi'(\|x - y\|_K^2)\, \langle x-y ,h \rangle_K, x\in K,
$$
so that $f\in C^1(K, \R)$. Applying the product rule (see \cite[p. 336]{lang}) yields also that, for all $x, h, k\in K$,
$$
D^2f(x) (h,k) = 2 \psi'(\|x\|^2) \langle h,k\rangle_K + 4 \psi''(\|x\|^2)\, \langle x-y ,h \rangle_K\, \langle x-y ,k \rangle_K .$$ 
The previous relation shows that $f\in C^2(K, \R)$ and also that, for all $x\in K$, the bilinear form $\varphi_x(h,k) := D^2f(x) (h,k) $ is such that the self-adjoint operator $A_{\varphi_x}$ defined in \eqref{e:aphi} equals (writing once again $I$ for the identity on $K$)
$$
2 \psi'(\|x\|^2) \, I + 4 \psi''(\|x\|^2)\, (x-y)\otimes (x-y),
$$
where $(x-y)\otimes (x-y)$ is shorthand for the rank-one operator $Th = (x-y) \, \langle x-y, h\rangle_K$. The conclusion is obtained by observing that, since ${\rm dim} \, K = +\infty$, the identity is not compact, from which one infers that (because $(x-y)\otimes (x-y)$ is finite rank) $A_{\varphi_x}$ is compact for all $x\in K$ if and only if $\psi'(r) = 0$ for all $r\geq 0$. This concludes the proof.

\qed

\medskip

\section{Acknowledgments}

FB is partially supported by the Italian MUR - PRIN project  no. 2022CLTYP4.\\
SB is supported by the Simons Foundation (Grant 635136).\\
GP is supported by the Luxembourg National Research Fund (Grant:
021/16236290/HDSA).

\printbibliography

\end{document}